\documentclass[a4paper]{paper}
\usepackage[utf8]{inputenc}
\usepackage{amsthm,amsfonts}
\usepackage[sumlimits]{amsmath}
\usepackage{amssymb}
\usepackage[all]{xy}
\usepackage{tikz}
\usepackage{color}

\newtheorem{lema}{Lemma}[section]
\newtheorem{teo}[lema]{Theorem}

\theoremstyle{definition}

\numberwithin{equation}{section}

\newcommand{\tq}{\,|\,}

\newcommand{\real}{\mathbb{R}}
\newcommand{\z}{\mathbb{Z}}

\newcommand{\apres}[2]{\langle #1 \tq #2\rangle}

\newcommand{\ed}{\ar@{-}}
\newcommand{\onto}{\twoheadrightarrow}

\renewcommand{\epsilon}{\varepsilon}

\newcommand{\pt}{\,\,\forall}
\newcommand{\xtil}{\tilde{x}}

\newcommand{\xhat}{\hat{x}}
\newcommand{\vhat}{\hat{v}}

\newcommand{\ehat}{\hat{e}}

\newcommand{\ggoth}{\mathcal{G}}
\newcommand{\lgoth}{\mathcal{L}}
\newcommand{\fgoth}{\mathcal{F}}
\newcommand{\hgoth}{\mathcal{H}}
\newcommand{\lf}{\lgoth_{\fgoth}}

\numberwithin{equation}{section}

\newcommand{\BZ}{\mathbb{Z}}

\DeclareMathOperator{\Id}{Id}
\DeclareMathOperator{\Hom}{Hom}

\DeclareMathOperator{\lcm}{lcm}

\begin{document}

\title{The $\Sigma^1$ invariant for some Artin groups of arbitrary circuit rank}

\author{Kisnney Almeida \\ \\
Departamento de Ciências Exatas, \\ Universidade Estadual de Feira de Santana (UEFS), \\ 44031-460 Feira de Santana, BA, Brazil;
}

\maketitle

\begin{abstract} We classify the Bieri-Neumann-Strebel-Renz invariant $\Sigma^1(G)$ for a class of Artin groups with minimal graphs of arbitrary circuit rank.
\end{abstract}

\section{Introduction}

The class of groups $G$ of homotopical type $F_m$  was first studied by C. T. C. Wall  and later a homological version, i.e. the class of  groups of type $FP_m$, was introduced by Bieri and Eckmann \cite{Bi}. By definition a group $G$ is of type $FP_m$ if the trivial $\mathbb{Z} G$-module $\mathbb{Z}$ has a projective resolution with all projectives finitely generated in dimensions $\leq n$ and $G$ is of type $F_m$ if there is a $K(G,1)$ CW-complex with finite $m$ skeleton. The homological and homotopical $\Sigma$-invariants of a group $G$ are monoid versions of the types $F_m$ and $FP_m$ for some special submonoids of $G$ associated to non-zero homomorphisms from $G$ to $\mathbb{R}$.

The first $\Sigma$-invariant was introduced originally by Bieri and Strebel for the class of metabelian groups in \cite{BiSt} and  was used to classify all finitely presented metabelian groups. This definition was later extended by Bieri, Neumann and Strebel  to all  finitely generated groups \cite{BiNSt}. New homological invariants $\{ \Sigma^m(G, \z) \}_{m \geq 1}$ were defined by Bieri and Renz in \cite{BiRe} and a series of homotopical $\Sigma$-invariants $\{ \Sigma^m(G) \}_{m \geq 1}$  was studied by Renz \cite{Re} and Meinert \cite{Mn}. It is interesting to note that  the invariant $\Sigma^m(G, \z)$ (resp. $\Sigma^m(G)$)  classifies which subgroups of $G$ above the commutator have homological type $FP_m$ (resp. $F_m$) provided the group $G$ is of type $FP_m$ (resp. $F_m$) \cite{BiRe}. As shown by Bieri and Groves in the case of metabelian groups, $\Sigma^1(G)$ has deep links with valuation theory from commutative algebra \cite{BiGr}.

The $\Sigma$-invariants are calculated for very few groups in all dimensions $m$, for example the case of the R. Thompson group $F$ was treated by Bieri, Geoghegan and Kochloukova in \cite{BiGeKo} but the case of the generalised Thompson groups $F_{n, \infty}$ is known only in dimension $m = 2$ for $n \geq 3$ \cite{Ko}. The case of metabelian groups of finite Prüfer rank was solved completely by Meinert \cite{Mn}.

Let $\mathcal{G}$ be a finite simplicial graph  with edges labeled by integer numbers greater than one. The Artin group $G$ associated to $\mathcal{G}$ has a finite presentation, with generators  the vertices $V(\mathcal{G})$  of $\mathcal{G}$ and relations
    $$\underbrace{uvu\cdots}_{n \text{ factors}}=\underbrace{vuv\cdots}_{n \text{ factors}} \hbox{ for each edge } \hbox{ of }\mathcal{G} \hbox{ with vertices } u,v \hbox{ and label }n \geq 2.$$
A subgraph $\hgoth$ of $\ggoth$ is called \textbf{dominant} if, for each $v\in V(\ggoth) \setminus V(\hgoth)$, there is an edge $e\in E(\ggoth)$ such that $\sigma(e)=v$ and $\tau(e)\in V(\hgoth)$, where $\sigma(e)$ is the beginning and $\tau(e)$ is the end of $e$.

Let $\chi : G \to \real $ be a non-zero character of an Artin group $G$ with underlying graph $\mathcal{G}$. An edge $e$ of $\mathcal{G}$ is called \textbf{dead} if $e$ has an even label greater than 2 and $\chi(\sigma (e))=-\chi(\tau (e))$.
 Set $\mathcal{L}_{\mathcal{F}}=\lf(\chi)$ as the complete subgraph of $\mathcal{G}$ generated by the vertices $v\in V(\mathcal{G})$ such that $\chi(v)\neq 0$. Define the \textbf{living subgraph} $\mathcal{L}=\mathcal{L}(\chi)\subset \mathcal{L}_{\mathcal{F}}$ as the subgraph obtained from $\mathcal{L}_{\mathcal{F}}$ after removing the dead edges.

If $\ggoth$ is connected the fundamental group
    $\pi_1(\ggoth)$ is free of rank $n$ for some $n \geq 0$. In this case we say that $G$ is an Artin group of \textbf{circuit rank} $n$. It was shown in  \cite{AlKo} that for Artin groups $G$ of circuit rank 1 $\Sigma^1(G)=\{[\chi]\in S(G)\tq \mathcal{L}(\chi) \text{ is a connected dominant subgraph of $\mathcal{G}$} \}.$ The case of Artin groups G of circuit rank 2 will be similarly treated in \cite{Al}.  To prove these results, we use some strategies to reduce the general graph to as few minimal graphs as possible with the same relevant properties. However, even these minimal cases are very hard to work with as the circuit rank of the graph increases. For instance we have treated the case of circuit rank 3 with the full graph on 4 vertices as underlying graph in \cite{AlKo2}, which was only possible by applying major restrictions on the labels.

    In this paper\footnote{I thank Dessislava Kochloukova for the valuable ideas and advisoring.} we prove the description of $\Sigma^1$ for a class of Artin groups whose underlying graphs are minimal graphs of arbitrary rank. This could be used, together with the techniques we show in \cite{AlKo}, to validate the same description to a much larger class of groups. In fact, we intend to use a special case ($n=3$) of this result to prove the circuit rank 2 case in \cite{Al}.

\medskip
{\bf Proposition A } \label{teonV1V}
{\it
  Let $G=G(k_1,k_2,\dots,k_n,l_2,l_3,\dots,l_n)$ the Artin group with underlying graph
  \begin{equation*}
  \xymatrix{
  u_2\ed[rrrrrrdd]^{2k_2}\ed[rrdd]^{2l_2+1} \\
  u_3\ed[rrrrrrd]^{2k_3}\ed[drr]_{2l_3+1}\\
  \vdots &&u_1\ed[rrrr]^{2k_1}                &&&& u\\
  u_{n-1}\ed[rrrrrru]_{2k_{n-1}}\ed[rru]^{2l_{n-1}+1}\\
  u_n\ed[rrrrrruu]_{2k_n}\ed[rruu]_{2l_n+1}
  }
  \end{equation*}
  such that $k_i,l_i$ are positive integers, with $k_i>1$ for all $i$ and $[\chi]\in\Hom(G,\real)$ such that $-1=\chi(u_1)=-\chi(u).$

  Besides, suppose
  \begin{equation}\label{eqhyp}
  \sum_{k_i \text{ is a potence of 2}}\frac{1}{2l_i+1}<1.
  \end{equation}

  Then $[\chi]\in\Sigma^1(G)^c$.}

\medskip
The proof of Proposition A is quite technical. It is given in section \ref{proofA}, where it is split in many small subsections. By Lemma \ref{corkerchifinger} Proposition A
is equivalent to ``$N = Ker(\chi)$ is not finitely generated'' and the idea of the proof of Proposition A is to find a quotient of $N$ that is not finitely generated.

Write $\ggoth$ for the  underlying graph of the group $G$ above. Using Proposition A we classify $\Sigma^1(G)$ for the class of groups considered in Proposition A.

\medskip
{\bf Theorem B}
  {\it Let $G = G(k_1,k_2,\dots,k_n,l_2,l_3,\dots,l_n)$ be an Artin group such that $k_i,l_i$ are positive integers with and $k_i>1$ for all $i$. Then
  $$\Sigma^1(G)=\{[\mu]\in S(G)\tq \mathcal{L}(\mu) \text{ is a connected dominant subgraph of $\mathcal{G}$} \}.$$
}

\subsection{On the $\Sigma$-invariants}

In this section we discuss several results from $\Sigma$-theory.  Let $G$ be a finitely generated group.
By definition $S(G)$ is the set of equivalence classes $[\chi]$ of non-zero real characters $\chi : G \to \mathbb{R}$ with respect to the equivalence relation $\sim$, where $\chi_1 \sim \chi_2$ precisely when there exists a positive real number $r$ such that $\chi_1 = r \chi_2$ i.e.  $[\chi] = R_{>0} \chi$. Thus $S(G)$ can be identified with the unit sphere $S^{n-1}$, where $n = \dim_{\mathbb{Q}} \ G/ G' \otimes_{\mathbb{Z}} \mathbb{Q} $. By definition for a finitely generated $\z G$-module $A$
$$
\Sigma^m(G, A) = \{ [\chi] \in S(G) \tq A \hbox{ has type } FP_m \hbox{ as left } \z[G_{\chi}]\hbox{-module} \},
$$
where $G_{\chi} = \{ g \in G  | \chi(g) \geq 0 \}$. The homotopical invariant $\Sigma^m(G)$ has a more complicated definition.  Here we will need only the case $m = 1$, where the situation is much simpler, since for any finitely generated group $G$ we have $\Sigma^1(G) = \Sigma^1(G, \z)$, where $\z $ is the trivial $\z G$-module.
For a subset $S \subseteq S(G)$ we write $S^c$ for the complement $S(G) \setminus S$.
The following result is a monoid version of the fact that a quotient of a finitely generated group is finitely generated.

\begin{lema} \label{corsigmaquocartin} Let $\pi:G \to \bar{G}$ be a group epimorphism and $\bar{\chi}$ be a non-trivial character of $\bar{G}$ such that $ \left[\bar{\chi}\circ\pi\right]\in\Sigma ^1(G)$. Then
  $\left[\bar{\chi}\right]\in\Sigma ^1\left(\bar{G}\right)$.
\end{lema}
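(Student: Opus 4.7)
The plan is to use the module-theoretic characterization of $\Sigma^1$ stated in the preceding paragraphs: for any finitely generated group $H$ and non-trivial character $\psi\colon H\to\real$, we have $[\psi]\in\Sigma^1(H)=\Sigma^1(H,\z)$ if and only if the trivial $\z H$-module $\z$ is finitely generated as a left $\z H_\psi$-module, where $H_\psi=\{h\in H\tq\psi(h)\geq 0\}$. Set $\chi=\bar\chi\circ\pi$, which is non-trivial since $\bar\chi$ is non-trivial and $\pi$ is surjective.

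First I would show that $\pi$ restricts to a surjective monoid homomorphism $G_\chi\onto\bar G_{\bar\chi}$. The inclusion $\pi(G_\chi)\subseteq\bar G_{\bar\chi}$ is immediate from $\bar\chi(\pi(g))=\chi(g)\geq 0$ for every $g\in G_\chi$. For the reverse inclusion, given $\bar g\in\bar G_{\bar\chi}$, use the surjectivity of $\pi$ to lift it to some $g\in G$ with $\pi(g)=\bar g$; then $\chi(g)=\bar\chi(\bar g)\geq 0$, so $g\in G_\chi$ and $\pi(g)=\bar g$.

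Next, this surjection extends to a surjective ring homomorphism $\z G_\chi\onto\z\bar G_{\bar\chi}$. The trivial $\z G_\chi$-module structure on $\z$ factors through this map, giving precisely the trivial $\z\bar G_{\bar\chi}$-module structure. Under the hypothesis $[\chi]\in\Sigma^1(G)$ there exist $x_1,\ldots,x_k\in\z$ generating $\z$ as a $\z G_\chi$-module; applying the ring epimorphism shows that the same elements generate $\z$ as a $\z\bar G_{\bar\chi}$-module. This yields $[\bar\chi]\in\Sigma^1(\bar G)$.

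The main (and essentially only) point requiring care is the identity $\pi(G_\chi)=\bar G_{\bar\chi}$; everything else is a formal consequence of the module characterization together with the general fact that a finitely generated module over a ring remains finitely generated over any quotient ring through which its action factors. This is what makes the statement a monoid analog of the triviality that the image of a finitely generated group under an epimorphism is again finitely generated, as indicated in the remark before the lemma.
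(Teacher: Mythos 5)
The key step you isolate --- that $\pi$ maps $G_\chi$ onto $\bar G_{\bar\chi}$ and hence induces a surjective ring homomorphism $\z G_\chi \onto \z \bar G_{\bar\chi}$ --- is correct, and it is indeed the essential ingredient (the paper states the lemma without proof, as a known ``monoid version'' of finite generation passing to quotients). However, the characterization of $\Sigma^1$ on which you build the argument is wrong: with the paper's definition, $[\psi]\in\Sigma^1(H)=\Sigma^1(H,\z)$ means that the trivial module $\z$ is of type $FP_1$ over $\z H_\psi$, i.e.\ finitely \emph{presented} over that monoid ring, not merely finitely generated. The trivial module $\z$ is always cyclic (generated by $1$, on which every $h\in H_\psi$ acts as the identity), so ``finitely generated over $\z H_\psi$'' holds for every character of every finitely generated group; your criterion would force $\Sigma^1(H)=S(H)$ always, which is false (for a non-abelian free group $\Sigma^1$ is empty). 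As a result, your hypothesis $[\bar\chi\circ\pi]\in\Sigma^1(G)$ is never genuinely used, and the conclusion you extract --- that $\z$ is finitely generated over $\z\bar G_{\bar\chi}$ --- is trivially true and does not yield $[\bar\chi]\in\Sigma^1(\bar G)$. This is a genuine gap, not a cosmetic one.

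The repair keeps your structure but works at the $FP_1$ level. Write $R:=\z G_\chi$ and $S:=\z\bar G_{\bar\chi}$; by your (correct) computation $\pi(G_\chi)=\bar G_{\bar\chi}$, the induced map $R\onto S$ is onto and commutes with the augmentation maps to $\z$. Now $\z$ is $FP_1$ over $R$ if and only if the augmentation ideal $I_R=\Ker(R\to\z)$ is finitely generated as an $R$-module (apply Schanuel's argument to $0\to I_R\to R\to\z\to 0$). Since $R\onto S$ is surjective and augmentation-preserving, it carries $I_R$ onto $I_S$, so the images of finitely many $R$-generators of $I_R$ generate $I_S$ over $S$; hence $\z$ is $FP_1$ over $S$, i.e.\ $[\bar\chi]\in\Sigma^1(\bar G,\z)=\Sigma^1(\bar G)$. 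Equivalently, one can base-change a finite presentation of $\z$ over $R$ along $R\onto S$, checking $S\otimes_R\z\simeq S/I_RS=S/I_S\simeq\z$, which again uses the surjectivity $\pi(G_\chi)=\bar G_{\bar\chi}$ that you proved.
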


The next result is one of the most important results on $\Sigma$-theory and one of the main motivations of the study of $\Sigma$-invariants.

\begin{teo} [\cite{BiRe}] \label{teosigma}
    Let $G$ be a group of homological type $FP_m$ and $H$ be a subgroup of $G$ containing  the commutator $[G,G]$. Then $H$ is of type $FP_m$ if and only if
    $S(G,H):=\{[\chi]\in S(G)\tq \chi(H)=0\}\subset \Sigma ^m(G,\mathbb{Z}) .$\end{teo}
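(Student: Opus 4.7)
The plan exploits the hypothesis $[G,G] \subseteq H$ to reduce the geometric side of the statement to an abelian setting: the quotient $A := G/H$ is a finitely generated abelian group, and a character $\chi$ of $G$ satisfies $\chi(H)=0$ precisely when it factors through $A$. Writing $n$ for the torsion-free rank of $A$, the set $S(G,H)$ is canonically identified with the unit sphere $S^{n-1}$ in $\Hom(A,\mathbb{R})$. When $n=0$ the subgroup $H$ has finite index in $G$, the sphere $S(G,H)$ is empty, and the biconditional holds trivially (a finite index subgroup of an $FP_m$ group is $FP_m$). So I assume $n \geq 1$ throughout.

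For the forward direction ($H$ of type $FP_m \Rightarrow S(G,H) \subseteq \Sigma^m(G,\mathbb{Z})$), I fix $[\chi] \in S(G,H)$ with induced character $\bar{\chi}$ on $A$. Since $\chi(H)=0$, the monoid $G_{\chi}$ is a disjoint union of left $H$-cosets indexed by the submonoid $A_{\bar{\chi}} := \{a\in A \tq \bar{\chi}(a)\geq 0\}$, so $\mathbb{Z}G_{\chi}$ is free as a right $\mathbb{Z}H$-module. Tensoring a $\mathbb{Z}H$-projective resolution of $\mathbb{Z}$ by finitely generated projectives in degrees $\leq m$ (which exists by hypothesis) with $\mathbb{Z}G_{\chi}$ yields an $FP_m$ resolution of the monoid ring $\mathbb{Z}[A_{\bar{\chi}}]$ over $\mathbb{Z}G_{\chi}$. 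The remaining ingredient is the classical Bieri--Strebel fact that the trivial module $\mathbb{Z}$ is of type $FP_{\infty}$ over $\mathbb{Z}[A_{\bar{\chi}}]$ for any nonzero character $\bar{\chi}$ on a finitely generated abelian group $A$. Splicing these two resolutions along the augmentation $\mathbb{Z}G_{\chi} \twoheadrightarrow \mathbb{Z}[A_{\bar{\chi}}] \twoheadrightarrow \mathbb{Z}$ produces the desired $FP_m$ resolution of $\mathbb{Z}$ as a $\mathbb{Z}G_{\chi}$-module, i.e.\ $[\chi] \in \Sigma^m(G,\mathbb{Z})$.

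For the reverse direction I use the chain-level (valuation) characterization of $\Sigma^m(G,\mathbb{Z})$: fixing a resolution $F_{\bullet} \to \mathbb{Z}$ by finitely generated free $\mathbb{Z}G$-modules through degree $m$ (which is available since $G$ is $FP_m$), membership $[\chi] \in \Sigma^m(G,\mathbb{Z})$ is equivalent to the existence of a partial chain contraction of $F_{\bullet}$ in degrees $\leq m-1$ whose $\chi$-weights on each basis element are bounded below. Because $\Sigma^m(G,\mathbb{Z})$ is open in $S(G)$ and the sphere $S(G,H) \cong S^{n-1}$ is compact, finitely many $\Sigma^m$-neighborhoods cover $S(G,H)$, yielding a finite family of characters $\chi_1,\dots,\chi_k$ with associated bounded partial contractions. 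Taking pointwise minima of the weight bounds produces a single partial contraction whose image is simultaneously bounded below under every character vanishing on $H$; this is precisely the data needed to descend $F_{\bullet}$ to a finite-type projective $\mathbb{Z}H$-resolution of $\mathbb{Z}$, proving $H$ is of type $FP_m$.

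The most delicate step is the reverse direction: one cannot simply take intersections of the monoids $G_{\chi_j}$ (their intersection need not recover $H$) nor glue pointwise resolutions over the various $\mathbb{Z}G_{\chi_j}$ into a resolution over $\mathbb{Z}H$. The patching must happen at the chain level through the Bieri--Renz valuation criterion, leveraging both openness of $\Sigma^m$ in $S(G)$ and compactness of the sphere $S(G,H)$.
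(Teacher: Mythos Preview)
The paper does not prove this theorem at all: it is stated with a citation to Bieri--Renz \cite{BiRe} and used as a black box. So there is no ``paper's own proof'' to compare against; your proposal is an attempt to supply the argument that the paper simply imports.

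As an outline of the Bieri--Renz proof, your forward direction is essentially correct: freeness of $\mathbb{Z}G_\chi$ over $\mathbb{Z}H$, the $FP_\infty$ property of $\mathbb{Z}$ over $\mathbb{Z}[A_{\bar\chi}]$ for abelian $A$, and a transitivity/splicing argument do yield $[\chi]\in\Sigma^m(G,\mathbb{Z})$.

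The reverse direction has the right architecture---openness of $\Sigma^m$ in $S(G)$, compactness of $S(G,H)$, and the valuation criterion---but the step ``taking pointwise minima of the weight bounds produces a single partial contraction'' is not correct as stated. One does not combine the chain endomorphisms $\varphi_{\chi_1},\dots,\varphi_{\chi_k}$ by any minimum operation; rather, each $\varphi_{\chi_j}$ strictly raises the $\chi_j$-valuation on the finite basis, and since each $\varphi_{\chi_j}$ has finite support it can only lower the other valuations by a bounded amount. A sufficiently high iterate of the composite $\varphi_{\chi_1}\circ\cdots\circ\varphi_{\chi_k}$ then raises all valuations in $S(G,H)$ simultaneously, and this is what forces the resolution to be essentially finite over $\mathbb{Z}H$. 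Your final sentence correctly identifies this as the delicate point, but the mechanism you describe for the patching is not the one that actually works.
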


Finally, the next result is an easy consequence of the application of Bieri-Renz's theorem on Artin groups.

\begin{lema}[\cite{AlKo}, Corollary 2.11]\label{corkerchifinger}
  Let $G$ be an Artin group and $\chi$ a discrete character of $G$. Then
  $[\chi]\in\Sigma^1(G) $ if and only if $ Ker(\chi) $  is finitely generated.
\end{lema}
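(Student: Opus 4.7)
The plan is to deduce the lemma from Bieri--Renz's theorem (Theorem \ref{teosigma}) together with an inversion symmetry of Artin groups. Let $N=\Ker(\chi)$. Since $\chi(G)$ is an abelian subgroup of $\real$, the quotient $G/N$ is abelian, and hence $N\supseteq[G,G]$. The hypothesis that $\chi$ is discrete means $\chi(G)\cong\z$, so any character of $G$ vanishing on $N$ factors through the cyclic quotient $G/N$ and is therefore a real scalar multiple of $\chi$. Consequently
$$S(G,N)=\{[\chi],[-\chi]\}.$$

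Artin groups are finitely presented, hence of type $FP_1$, and for any finitely generated group one has $\Sigma^1(G)=\Sigma^1(G,\z)$. Applying Theorem \ref{teosigma} with $m=1$ then yields that $N$ is finitely generated if and only if both $[\chi]$ and $[-\chi]$ lie in $\Sigma^1(G)$. The remaining step is to show that these two membership conditions are equivalent, which I would do by producing a group automorphism $\phi:G\to G$ satisfying $\chi\circ\phi=-\chi$ and invoking the standard fact that $\Sigma^1$ is invariant under automorphisms of $G$.

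The natural candidate is the involution $\phi$ defined on generators by $v\mapsto v^{-1}$. To verify that $\phi$ extends to $G$ one must check that it preserves each Artin relation of the form $\underbrace{uvu\cdots}_{n}=\underbrace{vuv\cdots}_{n}$. Inverting both sides produces an equation of the same shape with $u$ and $v$ interchanged and every letter replaced by its inverse, which is exactly the $\phi$-image of the original relation up to swapping the two sides; the check is a short case split according to whether $n$ is even or odd. This bookkeeping is the only mildly delicate point in the argument. Everything else reduces to a direct application of Bieri--Renz and the description of the characters vanishing on $N$ under the discreteness assumption.
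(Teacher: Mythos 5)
Your proposal is correct and follows essentially the route the paper relies on: the lemma is not reproved here but cited from \cite{AlKo} as ``an easy consequence of the application of Bieri--Renz's theorem on Artin groups,'' which is precisely your combination of Theorem \ref{teosigma} with $m=1$ (using $S(G,\Ker(\chi))=\{[\chi],[-\chi]\}$ for a discrete character) and the antipodal symmetry of $\Sigma^1$ furnished by the automorphism of an Artin group sending each standard generator to its inverse. Your verification that $v\mapsto v^{-1}$ respects the Artin relations (the even/odd case split) and your appeal to $\Aut(G)$-invariance of $\Sigma^1$ are both sound, so there is no gap.
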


\section{Proposition A implies Theorem B}

Note that by \cite{MeMnWy} if $G$ is an Artin group with underlying graph $\mathcal{G}$ and $\mu :  G \to \mathbb{R}$ a non-zero real character of $G$ such that  $\mathcal{L}(\mu)$ is a connected dominant subgraph of $\mathcal{G}$ then  $[\mu] \in \Sigma^1(G)$.
Observe that under the assumptions of Theorem B if $\mu : G \to \mathbb{R}$ is a non-zero homomorphism then $\mu(u_1) = \mu(u_2)=\cdots=\mu(u_n)$.
If the restriction of $\mu$ on all the vertices is non-zero and there isn't a dead edge then $\mathcal{L}(\mu) = \mathcal{G}$, so trivially $\mathcal{L}(\mu)$ is a connected dominant subgraph of $\mathcal{G}$ and hence $[\mu] \in \Sigma^1(G)$. If the restriction of $\mu$ on all the vertices is non-zero and there is a dead edge then $\mu(u) = 1 = - \mu(u_1) = - \mu(u_2) = \cdots = - \mu(u_n)$ or $\mu(u) = -1 = - \mu(u_1) = - \mu(u_2) =\cdots = - \mu(u_n)$. In this case by Proposition A we have that $[\mu] \notin \Sigma^1(G)$.

It remains to consider the following 2 cases : $\mu(u_1) = \mu(u_2) = \cdots = \mu(u_n) = 0$ or $\mu(u) = 0 $.
In both cases $\mathcal{L}(\mu)$ is a connected dominant subgraph of $\mathcal{G}$ hence $[\mu] \in \Sigma^1(G)$.

Recall that as stated above by \cite{MeMnWy} if $\mathcal{L}(\mu)$ is a connected dominant subgraph of $\mathcal{G}$ then  $[\mu] \in \Sigma^1(G)$.
By the last two paragraphs if $L(\mu)$ is not a connected dominant subgraph of $\mathcal{G}$ then either $\mu = \chi$ or $\mu = - \chi$, where $\chi$ is
the character considered in Proposition A. Then Lemma \ref{corkerchifinger} completes the proof.

\section{The proof of Proposition A} \label{proofA}

The proof of Proposition A is quite technical and is split in several subsections.

\subsection{An infinite presentation for $N = ker( \chi)$}

By definition $G$ has a presentation
\begin{align*}
G= \apres{u,u_1,u_2,\dots,u_n}{   &(uu_1)^{k_1}=(u_1u)^{k_1}, (uu_2)^{k_2}=(u_2u)^{k_2},\dots \\
                                    &\dots,(uu_n)^{k_n}=(u_nu)^{k_n}, (u_1u_2)^{l_2}u_1=(u_2u_1)^{l_2}u_2,\\
                                    &(u_1u_3)^{l_3}u_1=(u_3u_1)^{l_3}u_3,\dots,(u_1u_n)^{l_n}u_1=(u_nu_1)^{l_n}u_n}.
\end{align*}

Observe that if $\tilde{k}_i \geq 2$ is a divisor of $k_i$ for all $1 \leq i \leq n$ there is an epimorphism of
groups $$\pi : G(k_1,k_2,\dots,k_n,l_2,l_3,\dots,l_n) \to G(\tilde{k}_1,\tilde{k}_2,\dots,\tilde{k}_n,l_2,l_3,\dots,l_n)$$ induced by the identity on the vertices. Then, by Lemma  \ref{corsigmaquocartin}, to prove Proposition A we can work with $G(\tilde{k}_1,\tilde{k}_2,\dots,\tilde{k}_n,l_2,l_3,\dots,l_n)$ instead of\\  $G(k_1,k_2,\dots,k_n,l_2,l_3,\dots,l_n)$ i.e. substitute $k_i$ with $\tilde{k}_i$.
Thus we can assume that $k_i$ is prime for all $i$. For a technical reason, if the original $k_i$ is not a potence of $2$ we choose the new $k_i$ as a prime factor other than 2.

By Lemma \ref{corkerchifinger}, it is enough to prove that $N:=\ker (\chi)$ is not finitely generated. Suppose that $N$ is finitely generated.
Our strategy will be to find a finite sequence of quotients of $N$ in such a way that the last one is not finitely generated, leading to a contradiction.
Define
$$x_{0,i}:=uu_i,\quad i=1,\dots,n,\quad y_0:=x_{0,1}$$
 which means $u_i=u^{-1}x_{0,i}$ for all $i$.
  Then $N=\left\langle \{x_{0,i}\}_{1\leq i\leq n}\right\rangle ^G$.
  Define also
  $$x_{j,i}:=x_{0,i}^{u^j}\quad i=1,\dots,n,\quad j\in\z,$$
   which are generators of $N$ as a subgroup of $G$. We will also use the notation $\quad y_j:=x_{j,1}.$

Conjugating by powers of $u$, we can rewrite the relations of $G$ in the following way:

For $i=1,2,\dots,n$,
\begin{equation}\label{eqnV1Vrelpar}
 (u_iu)^{k_i}=(uu_i)^{k_i} \Leftrightarrow x_{j,i}^{k_i}=x_{0,i}^{k_i}, \quad \forall j\in\z.
 \end{equation}

For $i=2,3,\dots,n$,
 \begin{multline}\label{eqnV1Vrelimpar}
 (u_1u_i)^{l_i}u_1=(u_iu_1)^{l_i}u_i \Leftrightarrow \\
 x_{j,i}= y_{j-1}^{-1}x_{j-2,i}^{-1}y_{j-3}^{-1}x_{j-4,i}^{-1} \cdots y_{j-2l_i+3}^{-1}x_{j-2l_i+2,i}^{-1}y_{j-2l_i+1}^{-1}x_{j-2l_i,i}^{-1} \cdot \\ \cdot y_{j-2l_i}x_{j-2l_i+1,i}y_{j-2l_i+2}x_{j-2l_i+3,i} \cdots x_{j-3,i}y_{j-2}x_{j-1,i}y_j,  \quad \forall j\in\z.
 \end{multline}

Therefore $N$ has the following infinite  presentation
 $$N=\apres{\{x_{j,i}\}_{0\leq i \leq n, j\in \z}}{\eqref{eqnV1Vrelpar}, \eqref{eqnV1Vrelimpar}, 1\leq i\leq n, j\in \z}.$$

\subsection{A special quotient $\overline{N}$ of $N$}

Define the following quotient of $N$:

\begin{equation*}
\overline{N}:=\apres{N}{x_{0,i}^{k_i}=1, 1\leq i \leq n}.
\end{equation*}

Note that, in $\overline{N}$,

$$x_{j,i}^{k_i}=1,\quad \pt 1\leq i\leq n,\quad \pt j\in\z, $$

by \eqref{eqnV1Vrelpar}.

Define the groups

\begin{multline*}
K_i:=\apres{x_{0,i},x_{1,i},\cdots,x_{2l_i-1,i}}{x_{j,i}^{k_i}=1, j=0,1,\cdots,2l_i-1} \\
= \ast_{0\leq j\leq 2l_i-1}\apres{ x_{j,i} } { x_{j,i}^{k_i}=1 } \simeq \ast_{0\leq j\leq 2l_i-1} \z_{k_i},\quad 2\leq i\leq n,
\end{multline*}

$$K:=K_1=\apres{\{y_j\}_{j\in\z}}{y_j^{k_1}=1, j\in\z} = \ast_{j\in\z}\apres{y_j}{y_j^{k_1}=1} \simeq \ast_{j\in\z} \z_{k_1},$$

$$M:= \ast_{1\leq i\leq n} K_i, \quad M_i:= K \ast K_i, \quad \text{for } 2\leq i\leq n,$$

$$D:= \times_{2\leq i\leq n} \z_{k_i}, \quad A:=K\ast D .$$

Note that by \eqref{eqnV1Vrelimpar} $\{x_{j,i}\}_{j\in\z}$ may be seen as a subset of $M_i$ for each $i=2,\dots,n$. So we can think of $\overline{N}$ as a quotient of $M$. As a quotient of $N$, we have that $\overline{N}$ is also finitely generated.

\subsection{A technical lemma}

By \eqref{eqnV1Vrelimpar} we can define elements $\{x_{j,i}\}_{j\in \z}\subset M_i$. The following is a technical lemma about these elements.

\begin{lema}\label{lemanV1Vdecomp}
For each $2\leq i\leq n$ and for all $j\in\z$, there are unique $\tilde{x}_{j,i}\in K_i$ and $v_{j,i}\in K^{K_i}$ such that
$$x_{j,i}=\tilde{x}_{j,i}v_{j,i}.$$
 Note that if $j=0,1,\cdots,2l_i-1$ we have $\tilde{x}_{j,i}=x_{j,i}$ so $v_{j,i}=1$.

Besides, for each $2\leq i\leq n$,

\begin{multline}\label{eqdecomp}
\tilde{x}_{j,i}= \xtil_{j-2,i}^{-1}\xtil_{j-4,i}^{-1} \cdots \xtil_{j-2l_i+2,i}^{-1}\xtil_{j-2l_i,i}^{-1}\xtil_{j-2l_i+1,i}\xtil_{j-2l_i+3,i} \cdots \xtil_{j-3,i}\xtil_{j-1,i},\\ \forall j\in\z
\end{multline}

\begin{equation}\label{eqnV1Vlemavji+}
v_{j,i}y_j^{-1}\in \left\langle \{y_k^{K_1}\}_{0\leq k<j} \right\rangle, \quad \text{for } j\geq 2l_i
\end{equation}

\begin{equation}\label{eqnV1Vlemavji-}
v_{j,i}^{-1}y_j^{\xtil_{j,i}}\in \left\langle \{y_k^{K_i}\}_{j+1\leq k\leq 2l_i-1} \right\rangle \quad \text{for } j<0.
\end{equation}
\end{lema}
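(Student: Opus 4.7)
The organizing tool is the natural retraction $\pi_i \colon M_i = K \ast K_i \to K_i$ that is the identity on $K_i$ and kills $K$. A short calculation with alternating normal forms in a free product shows its kernel coincides with $\langle K^{K_i}\rangle$: any element in the normal closure of $K$ whose $K_i$-letters multiply to $1$ can be reshuffled as a product of $K_i$-conjugates of its $K$-letters. I would set $\tilde{x}_{j,i} := \pi_i(x_{j,i})$ and $v_{j,i} := \tilde{x}_{j,i}^{-1} x_{j,i}$; existence of the decomposition is by construction, uniqueness is immediate by applying $\pi_i$ to any candidate, and $v_{j,i}=1$ for $0 \le j \le 2l_i-1$ is clear since $x_{j,i}$ is then a generator of $K_i$. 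Formula \eqref{eqdecomp} drops out by applying $\pi_i$ to \eqref{eqnV1Vrelimpar} (every $y_k$ dies).

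For \eqref{eqnV1Vlemavji+} I would induct on $j \ge 2l_i$. The key manipulation is the free-product identity
\[
b_0 y_{k_1}^{\epsilon_1} b_1 \cdots y_{k_m}^{\epsilon_m} b_m = (y_{k_1}^{\epsilon_1})^{b_0^{-1}} (y_{k_2}^{\epsilon_2})^{(b_0 b_1)^{-1}} \cdots (y_{k_m}^{\epsilon_m})^{(b_0 \cdots b_{m-1})^{-1}} \cdot (b_0 b_1 \cdots b_m),
\]
valid for $b_t \in K_i$ and $\epsilon_t \in \{\pm 1\}$. Substituting $x_{m,i} = \tilde{x}_{m,i} v_{m,i}$ into \eqref{eqnV1Vrelimpar}, expanding each $v_{m,i}$ by the inductive hypothesis, and multiplying by $\tilde{x}_{j,i}^{-1}$ on the left, one obtains a word in $K$ and $K_i$ whose total $K_i$-factor $b_0 b_1 \cdots b_m$ is forced to be $1$ by \eqref{eqdecomp}. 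The identity then rewrites $v_{j,i}$ as a product of $K_i$-conjugates of various $y_k^{\pm 1}$, whose indices $k$ come from two sources: the indices appearing in \eqref{eqnV1Vrelimpar} (lying in $\{j - 2l_i, \dots, j - 1, j\}$) and the indices inherited from the $v_{m,i}$'s with $2l_i \le m \le j-1$ (inductively lying in $[0, m) \subseteq [0, j)$). The single $y_j$ appears only as the terminal letter of \eqref{eqnV1Vrelimpar}, and since nothing remains to its right it emerges with trivial conjugating element; peeling it off yields the desired description of $v_{j,i} y_j^{-1}$.

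The statement \eqref{eqnV1Vlemavji-} is handled symmetrically, inducting on $-j$ using the form of \eqref{eqnV1Vrelimpar} obtained by solving for $x_{j-2l_i, i}$. In that inverted recursion $y_j$ no longer sits at the boundary but is flanked by $\tilde{x}$-factors, so after the analogous reshuffling it emerges as a terminal letter conjugated by $\tilde{x}_{j,i}$ — hence the factor $y_j^{\tilde{x}_{j,i}}$ appearing in the statement. The main obstacle throughout will be the combinatorial bookkeeping: tracking which conjugates of $y_k$ appear and checking that all $K_i$-parts cancel at each step. This is however driven entirely by \eqref{eqdecomp} being the $\pi_i$-image of \eqref{eqnV1Vrelimpar}, which forces the cancellation automatically, so the difficulty is notational rather than conceptual.
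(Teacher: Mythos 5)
Your proposal is correct and follows essentially the same route as the paper: both rest on the decomposition $M_i = K_i \ltimes K^{K_i}$ coming from the free-product normal form (your retraction $\pi_i$ is just a cleaner packaging of the paper's uniqueness argument), obtain \eqref{eqdecomp} by projecting the recursion \eqref{eqnV1Vrelimpar} into $K_i$, and prove \eqref{eqnV1Vlemavji+} and \eqref{eqnV1Vlemavji-} by substituting $x_{k,i}=\xtil_{k,i}v_{k,i}$ into the forward, respectively backward, form of \eqref{eqnV1Vrelimpar} and sweeping the $K_i$-letters to one side by conjugation, with the induction you make explicit being left implicit in the paper. Note only that, exactly as in the paper's own computation (whose conjugators $\zeta_{k,i}$ lie in $K_i$), your argument yields $K_i$-conjugates in \eqref{eqnV1Vlemavji+} -- the ``$K_1$'' there appears to be a typo -- and this is what is actually used later in the passage to $B^{ab}$.
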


\begin{proof}
  Choose $2\leq i\leq n$. The first part is an easy consequence of normal form theorem for free products. We obtain \eqref{eqdecomp} by applying conjugations on \eqref{eqnV1Vrelimpar} and then using the uniqueness of the first part.

 Suppose $j\geq 2l_i$. To ease notation, let $\zeta_{k,i}$ be the product in $K_i$ of the $k$ last factors of the right side of  \eqref{eqdecomp}, so $\xtil_{j,i}=\zeta_{2l_i,i}$. Substituting $x_{k,i}=\xtil_{k,i}v_{k,i}$ in \eqref{eqnV1Vrelimpar} and conjugating the factors, from right to left, in way that we move the $\xtil_{k,i}$ to the left, we obtain

    \begin{multline*}
      x_{j,i}=\zeta_{2l_i,i} \left(y_{j-1}^{\zeta_{2l_i,i}}\right)^{-1} \left(v_{j-2,i}^{\zeta_{2l_i,i}}\right)^{-1} \left(y_{j-3}^{\zeta_{2l_i-1,i}}\right)^{-1} \left(v_{j-4,i}^{\zeta_{2l_i-1,i}}\right)^{-1} \cdots \\ \cdots \left(y_{j-2l_i+3}^{\zeta_{l_i+2,i}}\right)^{-1} \left(v_{j-2l_i+2,i}^{\zeta_{l_i+2,i}}\right)^{-1} \left(y_{j-2l_i+1}^{\zeta_{l_i+1,i}}\right)^{-1} \left(v_{j-2l_i}^{\zeta_{l_i+1,i}}\right)^{-1} \cdot \\ \cdot \left(y_{j-2l_i}^{\zeta_{l_i,i}}\right) \left(v_{j-2l_i+1,i}^{\zeta_{l_i-1,i}}\right) \left(y_{j-2l_i+2}^{\zeta_{l_i-1,i}}\right) \left(v_{j-2l_i+3,i}^{\zeta_{l_i-2,i}}\right) \cdots \\ \cdots \left(v_{j-3,i}^{\zeta_{1,i}}\right) \left(y_{j-2}^{\zeta_{1,i}}\right) \left(v_{j-1,i}\right) \left(y_j\right).
    \end{multline*}

    Then the uniqueness of the decomposition gives us the desired result for $j\geq 2l_i$.

    For $j<0$ we use a similar argument. Translating the indexes in \eqref{eqdecomp} from $j$ to $j+2l_i$ and reorganizing the equation we obtain

    \begin{multline}\label{eqlemanV1Vxtilneg}
      \xtil_{j,i}=\xtil_{j+1,i}\xtil_{j+3,i}\cdots\xtil_{j+2l_i-3,i}\xtil_{j+2l_i-1,i}\xtil_{j+2l_i,i}^{-1}\xtil_{j+2l_i-2}^{-1}\cdots\xtil_{j+4,i}^{-1}\xtil_{j+2,i}^{-1},\\ \pt j\in\z.
    \end{multline}

    By doing the same to \eqref{eqnV1Vrelimpar} we obtain

    \begin{multline}\label{eqlemanV1Vxneg}
    x_{j,i}=y_{j}x_{j+1,i}y_{j+2}x_{j+3,i}\cdots x_{j+2l_i-3,i}y_{j+2l_i-2}x_{j+2l_i-1,i}y_{j+2l_i} \cdot \\ \cdot x_{j+2l_i,i}^{-1}y_{j+2l_i-1}^{-1}x_{j+2l_i-2,i}^{-1}y_{j+2l_i-3}^{-1}\cdots x_{j+4,i}^{-1}y_{j+3}^{-1}x_{j+2,i}^{-1}y_{j+1,i}^{-1},\quad \pt j\in\z.
    \end{multline}

    To ease notation, consider $\zeta'_{k,i}$ as being the product in $K_i$ of the $k$ last factors on the right side of  \eqref{eqlemanV1Vxtilneg}, so $\xtil_{j,i}=\zeta'_{2l_1,i}$.

    Then substituting $x_{k,i}=\xtil_{k,i}v_{k,i}$ in \eqref{eqlemanV1Vxneg} and conjugating the factors, from the right to the left, in a way that we move the  $\xtil_{k,i}$ to the left we obtain

    \begin{multline*}
    x_{j,i}=\left(\zeta_{2l_i,i}'\right) \left(y_{j}^{\zeta_{2l_i,i}'}\right) \left(v_{j+1,i}^{\zeta_{2l_i-1,i}'}\right) \left(y_{j+2}^{\zeta_{2l_i-1,i}'}\right) \left(v_{j+3,i}^{\zeta_{2l_i-2,i}'}\right) \cdots \\ \cdots \left(v_{j+2l_i-3,i}^{\zeta_{l_i+1,i}'}\right) \left(y_{j+2l_i-2}^{\zeta_{l_i+1,i}'}\right) \left(v_{j+2l_i-1,i}^{\zeta_{l_i,i}'}\right) \left(y_{j+2l_i}^{\zeta_{l_i,i}'}\right) \cdot \\ \cdot \left(v_{j+2l_i,i}^{\zeta_{l_i,i}'}\right)^{-1} \left(y_{j+2l_i-1}^{\zeta_{l_i-1,i}'}\right)^{-1} \left(v_{j+2l_i-2,i}^{\zeta_{l_i-1,i}'}\right)^{-1} \left(y_{j+2l_i-3}^{\zeta_{l_i-2,i}'}\right)^{-1} \cdots \\ \cdots \left(v_{j+4,i}^{\zeta_{2,i}'}\right)^{-1} \left(y_{j+3}^{\zeta_{1,i}'}\right)^{-1} \left(v_{j+2,i}^{\zeta_{1,i}'}\right)^{-1} \left(y_{j+1}\right)^{-1}.
    \end{multline*}

    The result then follows from the uniqueness of the decomposition.
\end{proof}

\subsection{The epimorphism $\theta$}

We will now define  a epimorphism of groups
$\theta: M=K\ast \left(\ast_{2\leq i\leq n} K_i\right) \onto A = K * D$ such that $\theta_{|K}=\Id_K$ and that projects each $K_i$ onto $\z_{k_i}$. To do that, for each $2\leq i\leq n$ define $\theta$ as follows:

\begin{equation*}\theta(x_{0,i})=\theta(x_{1,i})=\cdots=\theta(x_{2l_i-2,i})=\bar{1}\in\z_{k_i}, \quad \theta(x_{2l_i-1,i})=\bar{2}\in\z_{k_i}.\end{equation*}

By first part of lemma \ref{lemanV1Vdecomp}, we have

$$\theta(\tilde{x}_{j+2l_i,i})= -\theta(\tilde{x}_{j,i})+\theta(\tilde{x}_{j+1,i})-\theta(\tilde{x}_{j+2,i})+\cdots+\theta(\tilde{x}_{j+2l_i-1,i}) \quad \forall j\in\z$$

which follows

\begin{multline} \label{eqnV1Vthetax}
\theta(\xtil _{2l_i,i})=\bar{1}, \quad \theta(\xtil _{2l_i+1,i})=\theta(\xtil _{2l_i+2,i})= \cdots =\theta(\xtil _{4l_i-1,i})=-\bar{1}, \\ \theta(\xtil_{4l_i,i})=-\bar{2},\quad \theta(\xtil _{4l_i+1,i})=-\bar{1},\\
 \quad \theta(\xtil _{4l_i+2,i})=\theta(\xtil _{4l_i+3,i})=\cdots =\theta(\xtil _{6l_i,i})=\bar{1}, \quad \theta(\xtil _{6l_i+1,i})=\bar{2}, \cdots
\end{multline}

and so

$$\theta(x_{j+4l_i+2,i})=\theta(x_{j,i}) \pt j\in\z.$$

By using the decomposition $x_{k,i}=\xtil_{k,i} v_{k,i}$, we obtain

$$
\theta(x_{j,i})^{k_i}= \underbrace{\left(\theta(\tilde{x}_{j,i})\theta(v_{j,i})\right)\left(\theta(\tilde{x}_{j,i})\theta(v_{j,i})\right)\cdots\left(\theta(\tilde{x}_{j,i})\theta(v_{j,i})\right)} _{k_i \text{ termos}},
$$

\begin{multline*}
\theta(x_{j,i})^{k_i}= \theta(\tilde{x}_{j,i})^{k_i} \left(\theta(v_{j,i})^{\theta(\tilde{x}_{j,i})^{k_i-1}}\right) \cdots \left(\theta(v_{j,i})^{\theta(\tilde{x}_{j,i})^2}\right) \left(\theta(v_{j,i})^{\theta(\tilde{x}_{j,i})}\right)\\ \left(\theta(v_{j,i})^1\right),
\end{multline*}

$$
\theta(x_{j,i})^{k_i}=\left(\theta(v_{j,i})^{\theta(\tilde{x}_{j,i})^{k_i-1}}\right) \cdots \left(\theta(v_{j,i})^{\theta(\tilde{x}_{j,i})^2}\right) \left(\theta(v_{j,i})^{\theta(\tilde{x}_{j,i})}\right) \left(\theta(v_{j,i})^1\right).
$$

Now define

$$x_i:=\bar{1}=\theta(x_{0,i})=\theta(\xtil_{0,i})\in\z_{k_i}.$$

Suppose $k_i\neq 2$. Then by \eqref{eqnV1Vthetax} $\theta(\xtil_{j,i})$ is a generator of $\z_{k_i}$ for all $j\in\z$. So, possibly after a permutation of exponents $\{x_i^{k_i-1},x_i^{k_i-2},\dots, x_i^2,x_i,1\}$ we have

\begin{equation}\label{eqnV1Vrelfinalx}
\theta(x_{j,i})^{k_i}=\left(\theta(v_{j,i})^{x_i^{k_i-1}}\right) \cdots \left(\theta(v_{j,i})^{x_i^2}\right) \left(\theta(v_{j,i})^{x_i}\right) \left(\theta(v_{j,i})^1\right).
\end{equation}

If  $k_i=2$ we have

\begin{equation}\label{eqnV1Vcasosxtili}
\theta(\xtil_{j,i})= \left\{    \begin{array}{ll}
                                e_A, & \text{if } j \equiv -2 \mod 2l_i+1 \\
                                x_i, & \text{else}
                            \end{array}                                        \right.
\end{equation}

where $e_A$ is the neutral element of $A$. So if $j \equiv -2 \mod 2l_i+1$ then
\begin{equation}\label{eqnV1Vrelfinalesp}
\theta(x_{j,i})^2=\theta(v_{j,i})^2.
\end{equation}
Note that this is the only case for which $\theta(\xtil_{j,i})=e_A$.

\subsection{One commutative diagram and the group $B$}

\noindent
Consider the canonical projection
$$\delta:A\onto \frac{A}{\left\langle \{\theta(x_{j,i})^{k_i}\}_{j\in\z,2\leq\i\leq n} \right\rangle ^A}=:\overline{A}$$
and the following commutative diagram of group homomorphisms
$$
\xymatrix{
  M\ar@{->>}[rrr]^{\theta}\ar@{->>}[d]_{\pi} &&& A \ar@{->>}[d]^{\delta} \\
  \overline{N}\ar@{->>}[rrr]_{\overline{\theta}}                &&& \overline{A}
  }
$$
where $\overline{\theta}$ is induced by $\theta$.
Note that since $\overline{N}$ is finitely generated $\overline{A}$ is finitely generated.

Note also that $K^A$ is a finite index subgroup of $A$, since
\begin{equation}\label{eqnV1VquocporK}
 \frac{A}{K^A}\simeq \times_{2\leq i\leq n}\z_{k_i}=D.
\end{equation}

Then $$B:=\delta(K^A)$$ is a finite index subgroup of $\overline{A}$, so $B$ is a finitely generated group. Note that $ \overline{A} / B \simeq  A / K^A\simeq D$, furthermore $A  = K^A \rtimes D$, hence $\overline{A} = B \rtimes D$.
As $x$ is a generator of $\z_{k_1}$ and $z$ is a generator of $\z_{k_3}$ as groups, then $\z[D]\simeq \z[x^{\pm 1},z^{\pm 1}]$. As $x_i$ is a generator of $\z_{k_i}$ for each $2\leq i\leq n$  then
$$\z_{k_1}[D]\simeq \z_{k_1}[x_2^{\pm 1},x_3^{\pm 1},\dots,x_n^{\pm 1}]$$
 as a $\z_{k_1}$- algebra.

\subsection{The abelianization $B^{ab}$ of $B$}

Consider the abelianization $B^{ab}$ of $B$. The $\z$-module $B^{ab}$ is finitely generated, because $B$ is a finitely generated group. We will think of $B^{ab}$ as a $\z [D]$-module, generated by the classes of
$$e_j:=\delta\theta (y_j),$$
where the action (denoted by $\circ$) of $D \simeq \overline{A} / B $ is induced by conjugation (denoted by $\circ$).

Define

$$p_{k}(a):=1+a+\cdots +a^{k-2}+a^{k-1}$$

and
$$\xhat_{j,i},\quad \ehat_j,\quad \vhat_{j,i}$$
   as being the imagens in $B^{ab}$ of   $$\delta\theta(\xtil_{j,i}),\quad e_j,\quad \delta\theta (v_{j,i}),$$
   respectively.

By \eqref{eqnV1Vrelfinalx} we have that, as $\z_{k_1}[D]$-modules,

\begin{equation}\label{eqnV1VBab}
B^{ab}\simeq \frac{\oplus_{j\in\z} e_j\left(\z_{k_1}[x_2^{\pm 1},x_3^{\pm 1},\dots,x_n^{\pm 1}]\right)}{I+I'},
\end{equation}

in such a way that $e_j\z_{k_1}[D]\simeq\z_{k_1}[D]$, $I$ is the $\z_{k_1}[D]$-submodule generated by

$$  \{\vhat_{j,i} \circ (p_{k_i}(x_i)) \tq \theta(\xtil_{j,i})\neq e_A\}$$

and $I'$ is the $\z_{k_1}[D]$-submodule generated by

$$\{k_i\vhat_{j,i} \tq \theta(\xtil_{j,i})=e_A\}=\{2\vhat_{j,i} \tq k_i=2, \theta(\xtil_{j,i})\}.$$

 \subsection{A contradiction : $B^{ab}$ is not finitely generated}

Without loss of generality, suppose the elements of $\{k_i\}_{2\leq i\leq n}$ which are potences of 2 are
$$k_2=k_3=\cdots=k_{m+1}=2,$$
so, by \eqref{eqhyp},

\begin{equation}\label{eqhyp2}
\sum_{2\leq i\leq m+1}\frac{1}{2l_i+1}<1.
\end{equation}

Define
$$L_i=\{j\in\z \tq j\equiv -2\mod 2l_i+1\}.$$

By \eqref{eqnV1Vrelfinalx}, \eqref{eqnV1Vcasosxtili} and \eqref{eqnV1Vrelfinalesp} $I$ is the $\z_{k_1}[D]$-submodule generated by

\begin{multline*}
\{\vhat_{j,i} \circ p_{k_i}(x_i),\\ \text{ for } (j,i)\in \left(\z \times \{m+2,m+3,\dots,n\}\right) \cup \left(\left(\z\setminus L_i\right) \times \{2,3,\dots, m+1\}\right)\}
\end{multline*}

and $I'$ is the $\z_{k_1}[D]$-submodule generated by
$$\left\{2\vhat_{j,i}, \text{ for } (j,i)\in L_i \times \{2,3,\dots,m+1\}\right\}.$$

Now define the $\z_{k_1}$-algebra

\begin{equation}\label{eqdefR}
R:=\frac{\z_{k_1}[x_2,x_3,\dots,x_n]}{\left( \{x_i+1\}_{2\leq i\leq m+1} \cup \{p_{k_i}(x_i)\}_{m+2\leq i\leq n}   \right)}\simeq \frac{\z_{k_1}[x_{m+2},x_{m+3},\dots,x_n]}{\left( \{p_{k_i}(x_i)\}_{m+2\leq i\leq n}   \right)},
\end{equation}

so that
$$x_2=x_3=\cdots=x_{m+1}=-1$$
 in $R$.

Note that $I'$ is contained in the $\z_{k_1}$-vector subspace of $R$ generated by
      $$\bigsqcup_{(j,i)\in L_i \times \{2,3,\dots,m+1\}}\vhat_{j,i}R,$$
       with the equality holding if $2\neq k_1$.

So the $\z_{k_1}$-vector space

$$W:=\frac{\oplus_{i\in\z} \ehat_iR}{\left\langle \bigsqcup_{(j,i)\in L_i \times \{2,3,\dots,m+1\}}\vhat_{j,i}R \right\rangle},\quad \text{onde } \ehat_i R\simeq R,$$

is a quotient of $B^{ab}$ hence finite dimensional.

Let

$$l:=\lcm (\{2l_i+1\}_{2\leq i\leq m+1})\geq 3.$$

Define also, for each $s\geq 1$, the set

$$\Lambda_s:=\{0,1,\dots,sl-1\}$$

and the following subspace of $W$:

$$E_s:=\frac{\oplus_{i\in\Lambda_s} \ehat_iR}{\left\langle \bigsqcup_{(j,i)\in L_i \times \{2,3,\dots,m+1\}}\vhat_{j,i}R \right\rangle \cap \left[ \oplus_{i\in\Lambda_s} \ehat_iR \right]}.$$

By lemma \ref{lemanV1Vdecomp},

\begin{itemize}
  \item
  $$\vhat_{j,i}-\ehat_j\in \oplus_{0\leq k< j}\ehat_k R,\quad \text{if } j\geq 2l_i;$$

  \item
  $$\vhat_{j,i}=0,\quad \text{if } 0\leq j\leq 2l_i-1;$$

  \item
  $$\vhat_{j,i}-\ehat_j\xhat_{j,i}\in \oplus _{j+1<k\leq 2l_i-1}\ehat_k R,\quad \text{if } j<0.$$
\end{itemize}

So

$$E_s=\frac{\oplus_{i\in\Lambda_s} \ehat_iR}{\left\langle \bigsqcup_{(j,i)\in \left[L_i\cap \Lambda_s\right] \times \{2,3,\dots,m+1\}}\vhat_{j,i}R \right\rangle }.$$

Note that by \eqref{eqdefR} we have the following equality of $\BZ_{k_1}$-vector spaces:

$$\left\langle \vhat_{j,i}R \right\rangle = \left\langle \{\vhat_{j,i}x_2^{j_2}x_3^{j_3}\cdots x_n^{j_n}\}_{0\leq j_i\leq k_i-2, 2\leq i\leq n} \right\rangle.$$

Then $\dim_{\z_{k_1}} \left\langle \vhat_{j,i}R \right\rangle=\dim R=\delta$, hence
\begin{align*}
\dim_{\z_{k_1}} \left\langle \bigsqcup_{(j,i)\in \left[L_i\cap \Lambda_s\right] \times \{2,3,\dots,m+1\}}\vhat_{j,i}R \right\rangle &\leq \delta \sum_{2\leq i\leq m+1} \sharp \left[ L_i\cap \Lambda_s \right]\\
&\leq \delta  \sum_{2\leq i\leq m+1}\frac{sl}{2l_i+1}\\
&\leq \delta sl \left( \sum_{2\leq i\leq m+1}\frac{1}{2l_i+1}\right).
\end{align*}

As $\dim_{\z_{k_1}}\oplus_{i\in\Lambda_s} e_iR=\delta sl$, we have

$$
\dim_{\z_{k_1}} E_s \geq \delta sl \left(1- \sum_{2\leq i\leq m+1}\frac{1}{2l_i+1}\right)>0,
$$

where the last inequality comes from \eqref{eqhyp2}. The above implies
$$\lim_{s\to \infty } \dim_{\z_{k_1}} E_s = \infty.$$
 As $E_s\leq W$ for each $s>0$, then $W$ is infinite dimensional, which is a contradiction.

\end{document}